\def\Ddots{\mathinner{\mkern1mu\raise\p@
\vbox{\kern7\p@\hbox{.}}\mkern2mu
\raise4\p@\hbox{.}\mkern2mu\raise7\p@\hbox{.}\mkern1mu}}
\titleformat*{\subsection}{\Large\bfseries}
\titleformat*{\subsubsection}{\large\bfseries}
\titleformat*{\paragraph}{\large\bfseries}
\titleformat*{\subparagraph}{\large\bfseries}
\theoremstyle{Theorem}
\newtheorem{thm}{Theorem}[section]
\newtheorem{lem}[thm]{Lemma}
\newtheorem{qn}[thm]{Question}
\theoremstyle{definition}
\newcommand{\N}{\mathbb{N}}
\newcommand{\Q}{\mathbb{Q}}
\newcommand{\Po}{\mathbb{P}}
\newcommand{\F}{\mathcal{F}}
\newcommand{\p}{\mathcal{P}}
\newcommand{\bN}{\beta\mathbb{N}}
\date{\vspace{-5ex}}
\begin{document}

\title{{\bf  Matrix Formulation of Moreira Theorem}}
%Nonlinear Extension of Monochromatic Pythagorean Triple and Rado's Conjecture Concerning Degree of Regularity of Equations}}

\author{ Sayan Goswami \footnote{Ramakrishna Mission Vivekananda Educational and Research Institute, Belur Math,
Howrah, West Benagal-711202, India, \textit{ sayan92m@gmail.com}.
}}

%\date{\vspace{-5ex}}

\makeatother

\maketitle
\begin{abstract}
In a celebrated article, Moreira proved for every finite coloring of the set of naturals, there exists a monochromatic copy of the form $\{x,x+y,xy\},$ which gives a partial answer to one of the central open problems of Ramsey theory asking whether $\{x,y,x+y,xy\}$ is partition regular. In this article, we prove the matrix version of the Moreira theorem. We prove that if $A$ and $B$ are two finite image partition regular matrices of the same order, then for every finite coloring of the set of naturals, there exist two vectors $\overrightarrow{X}, \overrightarrow{Y}$ such that $\{A\overrightarrow{X}, A\overrightarrow{X}+B\overrightarrow{Y},A \overrightarrow{X}\cdot B\overrightarrow{Y}\}$ is monochromatic, where addition and multiplication are defined coordinate-wise.

\end{abstract}

\section{Introduction}

Arithmetic Ramsey theory deals with the monochromatic patterns found in any given finite coloring of the
integers or of the natural numbers $\N$. Here ``coloring” means disjoint partition and a set is called ``monochromatic” if it is included in one piece of the partition. A collection $\F$ of subsets of $\N$ is called partition regular if, for every finite coloring of $\N$, there exists a monochromatic element $F\in \F$. Similarly an equation $F(x_1,\ldots ,x_n)=0$ over $\N$ is called partition regular if, for every finite partition of $\N$, there exists a monochromatic solution of the equation $F.$ One of the longstanding open problems in Ramsey theory that appeared in literature is the following. 
\begin{qn}\textup{(\cite[Question 3]{update}, \cite[Page 58]{erdos}, \cite[Question 11]{hls})}
    Is the pattern $\{x,y,x+y,xy\}$ partition regular?
\end{qn}

This question was studied at least as early as 1979 by Hindman \cite{..} and  Graham \cite{.} by brute force computation, where they found affirmative answers for $2$-coloring. Recently in \cite{...}  Bowen found a combinatorial proof of this result. However, for any finite coloring, the question is still open. In \cite{anal}, Moreira proved for any finite coloring of $\N,$ there exists $x,y\in \N$ such that  $\{x,x+y,x\cdot y\}$ is monochromatic. Later  Alweiss found a short proof in \cite{al1}. Recently in \cite{AG}, we proved a set-theoretic version of Moreira's theorem.

%After  Moreira proved his result, it was unknown if there exists a set-theoretic version of the Moreira theorem. 
%In \cite{b}, Bowen proved that for every finite partition of $\N,$ there exists an infinite set $A$ and a finite set $B$ as large as desired such that $AB\cup (A+B)$ is monochromatic. Recently in \cite{AG}, we found a very short proof of the set version of the Moreira theorem in full strength. We proved that for every finite partition of $\N,$ there exists an infinite set $A$ and a finite set $B$ as large as desired such that $A\cup AB\cup (A+B)$ is monochromatic. 
For any nonempty set $X,$ denote by $\p_f(X),$ the set of all nonempty finite subsets of $X$, and by $\Po$ we denote the set of all polynomials with no constant term.
The following result follows from the \cite[Proof of  Theorem 1.4]{anal}. 
\begin{thm}\label{essential}
    For any $r\in \N,$ $F\in \p_f(\Po)$ and for any $r$- coloring $\N=\bigcup_{i=1}^rC_i$ there exists $i\in [1,r]$ and $y\in \N$ such that 
 $$\left\lbrace x: \{ x,xy,x+f(y):f\in F\}\subset C_i\right\rbrace$$ is infinite.

 In particular $\{x,xy,x+y\}$ is partition regular.
\end{thm}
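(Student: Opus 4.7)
The plan is to revisit Moreira's proof from \cite{anal} and observe that it already yields the stronger statement above: the two enhancements needed are to allow a finite family of polynomial shifts rather than the single map $y\mapsto y$, and to track that the resulting set of valid $x$ is in fact infinite (not merely nonempty).

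First I would fix the coloring $\mathbb{N} = \bigcup_{i=1}^{r} C_i$ and choose an idempotent $p$ in the compact right-topological semigroup $(\beta\mathbb{N}, \cdot)$. By pigeonhole, some color class $C := C_i$ belongs to $p$. Since $p \cdot p = p$, the set $\{ y : y^{-1}C \cap C \in p\}$ lies in $p$, so for $p$-many $y$ the set $\{ x : x, xy \in C\}$ is itself a member of $p$, hence infinite. This secures the multiplicative leg of the pattern.

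To incorporate the additive shifts $x + f(y)$ for $f \in F$, I would invoke Moreira's mixed multiplicative/additive recurrence argument, which passes (via a Furstenberg-style correspondence) to a measure preserving system and there uses a polynomial multiple recurrence statement. The key feature of that argument is that once $y$ is drawn from an appropriately large subset governed by the multiplicative ultrafilter $p$, the additive translates $C - f(y)$ — for any finite family $F$ of polynomials with zero constant term — simultaneously intersect $C$ in a $p$-large set. Thus one can select $y$ so that $A_y := C \cap y^{-1}C \cap \bigcap_{f \in F}(C - f(y)) \in p$; every $x \in A_y$ then satisfies $\{x, xy, x + f(y) : f \in F\} \subset C$, and there are infinitely many such $x$.

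The main obstacle is the coordination in this last step: producing a single $y$ that simultaneously controls the multiplicative translate $y^{-1}C$ and the polynomial additive translates $C - f(y)$. This is exactly what Moreira's double-recurrence argument delivers, and it is where the use of polynomials (rather than the single shift $y$) costs nothing, since the recurrence statement is already polynomial in $y$. Every other ingredient — ultrafilter pigeonhole, extraction of a multiplicative idempotent, and infiniteness of sets in an ultrafilter — is standard.
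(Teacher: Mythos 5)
The paper itself does not reprove this theorem: it records it as a byproduct of the \emph{proof} of Theorem 1.4 in \cite{anal}, so deferring to Moreira is legitimate in principle. But your description of that proof, and of what it delivers, is where the problem lies. Moreira's argument does not pass through a Furstenberg-style correspondence to a measure preserving system and a polynomial multiple recurrence theorem; the absence of such a transfer for mixed additive--multiplicative patterns is precisely why the question was hard, and his proof is instead a combinatorial/PET-style induction over the finite family $F$ whose external input is the Bergelson--Leibman polynomial van der Waerden theorem, with multiplicative syndeticity (not a prescribed ultrafilter) as the largeness notion that propagates. Note also that the polynomial shifts $x+f(y)$, $f\in F$, are already in the statement of Moreira's Theorem 1.4; the only genuine enhancement to be extracted from his proof is the quantifier ordering, namely that there is a single $y$ for which the set of admissible $x$ is infinite.

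The concrete gap is your pivotal claim that for an arbitrary multiplicative idempotent $p$ and an arbitrary $C\in p$ one can choose $y$ so that $A_y=C\cap y^{-1}C\cap\bigcap_{f\in F}\left(C-f(y)\right)\in p$. Nothing in \cite{anal} gives this: Moreira does not fix an idempotent and an arbitrary member of it in advance; the color class for which the configuration is found is produced by the argument together with its specific largeness properties, and the largeness of the set of good $x$ that comes out is not membership in a prescribed ultrafilter. Asserting that the additive translates $C-f(y)$ are $p$-large for a \emph{multiplicative} idempotent $p$ is exactly the additive--multiplicative interaction that a multiplicatively large set need not possess and that the proof has to engineer around, not a statement it establishes; as written, you have reduced the theorem to an unproved (and uncited) strengthening. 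Your first two steps (pigeonhole into $C\in p$, and $C\cap y^{-1}C\in p$ for $p$-many $y$, hence infinite) are fine but only handle the multiplicative leg; to justify the theorem you must actually open Moreira's proof and verify that, for the $y$ and the color class his construction yields, the set of admissible $x$ it produces is infinite, rather than invoke the ultrafilter shortcut, which is not available from the cited source.
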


A matrix $A$ is said to be an image partition regular matrix if, for any finite colorings of $\N,$ there exists $\overrightarrow{X}$ such that the entries of $A\overrightarrow{X}$ are monochromatic.
Many Ramsey theoretic results can be formulated in terms of images of matrices. For example, van der Waerden's theorem \cite{vdw} which guarantees the existence of monochromatic arithmetic progressions can be formulated as the images of the matrix $ A_l=
\begin{pmatrix} 1 & 0  \\
1 & 1  \\
\vdots & \vdots \\
1 & l \\
\end{pmatrix},$ where $l\in \N.$ Similarly Hindman's Finite Sum Theorem can be formulated as the image of the infinite matrix each of whose rows consists of all zeros but finitely many $1$. 
%A matrix is said to be image partition regular if for every finite coloring of $\N,$ there exists 
In this article, we prove the following theorem. Before that note addition and multiplication are defined coordinate-wise.

\begin{thm}\label{todo}
    If $A$ and $B$ are two finite image partition regular matrices of same order, then for every finite coloring of the set of $\N$, there exists $\overrightarrow{X}, \overrightarrow{Y}$ such that $\{A\overrightarrow{X}, A\overrightarrow{X}+B\overrightarrow{Y},A \overrightarrow{X}\cdot B\overrightarrow{Y}\}$ is monochromatic.
\end{thm}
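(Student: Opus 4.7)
The plan is to deduce Theorem~\ref{todo} from Theorem~\ref{essential} by encoding the matrix $B$ inside the family of polynomials fed to the latter and then invoking image partition regularity of $A$ on the set of good $x$'s it produces. The pivotal observation is that for fixed auxiliary variables $y_{1},\dots,y_{n}$, each coordinate of $B\overrightarrow{Y}$ is a linear polynomial $g_{i}(y_{1},\dots,y_{n})=\sum_{j=1}^{n}b_{ij}y_{j}$ with no constant term; hence the additive conditions $A_{i}\overrightarrow{X}+B_{i}\overrightarrow{Y}$ and the multiplicative conditions $A_{i}\overrightarrow{X}\cdot B_{i}\overrightarrow{Y}$ can be rewritten as polynomial conditions in the $y_{j}$'s to which a suitably strengthened form of Theorem~\ref{essential} should apply.

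The first step would be to upgrade Theorem~\ref{essential} in three ways: (i) allow the parameter $y$ to be a tuple $\overrightarrow{y}\in\N^{n}$ and the polynomials in $F$ to be multivariable; (ii) allow several multiplicative terms $x\cdot f(\overrightarrow{y})$, one for each $f\in F$, in addition to the additive terms $x+g(\overrightarrow{y})$; and (iii) strengthen the conclusion so that the set of good $x$'s is a member of some minimal idempotent of $(\bN,+)$---i.e.\ a central set---not merely infinite. These modifications are consistent with the ultrafilter-style argument underlying \cite{anal} (and its refinement in \cite{AG}), since the set of good $x$'s there is already produced as a member of an additive idempotent ultrafilter and nothing in the argument intrinsically uses the scalarity of $y$ or the restriction to a single multiplicative term. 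Granting this upgrade, apply it to $F=\{g_{1},\dots,g_{m}\}$ coming from the rows of $B$, serving as both the multiplicative and the additive list. One obtains $\overrightarrow{Y}\in\N^{n}$, a color class $C$, and a central set $E\subseteq C$ such that for every $x\in E$ and all $i,k\in[1,m]$,
\[
x,\quad x\cdot B_{i}\overrightarrow{Y},\quad x+B_{k}\overrightarrow{Y}\ \in\ C.
\]
Since $E$ is central and $A$ is image partition regular, the standard characterization of IPR matrices yields $\overrightarrow{X}\in\N^{n}$ with every entry of $A\overrightarrow{X}$ in $E$. Taking $x=A_{i}\overrightarrow{X}$ and $k=i$ in the display, the $3m$ coordinates of $A\overrightarrow{X}$, $A\overrightarrow{X}+B\overrightarrow{Y}$, and $A\overrightarrow{X}\cdot B\overrightarrow{Y}$ all land in $C$, which is the desired conclusion.

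The main obstacle is the structural strengthening of Theorem~\ref{essential} itself. Infinitely many good $x$'s are insufficient to invoke IPR of $A$---Behrend-type sparse sets contain no three-term arithmetic progression and so cannot absorb the image of the van der Waerden matrix---so the central-set upgrade is essential. Accommodating several multiplicative terms simultaneously is also delicate: morally this is a multiplicative IP-type statement, and I expect it to follow from the same ultrafilter machinery that proves Moreira's theorem, now using an idempotent in $(\bN,\cdot)$ that refines the additive structure. Once these enhancements are in place, the remainder of the argument reduces to one application of the IPR characterization and the diagonal choice $k=i$.
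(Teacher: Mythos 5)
The decisive step in your plan --- the threefold ``upgrade'' of Theorem \ref{essential} --- is not proved, and it is not the routine modification you suggest. Point (ii), allowing several multiplicative terms $x\cdot g_{i}(\overrightarrow{y})$ with the $g_{i}$ running over the rows of an image partition regular matrix $B$, is already (a strengthening of) Theorem \ref{todo} itself: taking $A$ to be the matrix of the same order as $B$ whose first column is all ones and whose other entries are zero, the statement you assume produces exactly a monochromatic set $\{x\}\cup\{x+B_{i}\overrightarrow{Y}\}\cup\{x\cdot B_{i}\overrightarrow{Y}\}$, i.e.\ the theorem for this $A$, and on top of that you demand that the set of admissible $x$ for one fixed $\overrightarrow{Y}$ be additively central. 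Theorem \ref{essential}, as extracted from \cite{anal}, provides a single product term $xy$ and only an \emph{infinite} set of good $x$; Moreira's proof is a combinatorial induction, not an argument that hands you the good $x$'s as a member of a minimal additive idempotent, so (iii) also requires a genuine proof rather than the assertion that nothing intrinsically uses the scalarity of $y$ or the single product. As you yourself observe, infinitude is not enough to place $A\overrightarrow{X}$ inside the good set, so without (ii) and (iii) the argument collapses, and with them it is essentially circular: the only genuinely new difficulty --- several products sharing one color with the sums --- is precisely what is being assumed.

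The paper's proof is engineered to avoid exactly this. It never strengthens Theorem \ref{essential}: all products $a\cdot b$ with $a\in A\overrightarrow{X}$, $b\in B\overrightarrow{Y}$ are supplied by Lemma \ref{lem}, a purely multiplicative ultrafilter argument (a minimal idempotent of $(\bN,\cdot)$ together with the fact that finite image partition regular matrices have images inside every member of every $p\in K(\bN,\cdot)$), and a compactness argument places such a configuration inside $[1,R]$. One then colors $\N$ by the $r^{R}$-coloring recording the colors of $i\alpha$ for $i\in[1,R]$ and applies Theorem \ref{essential} exactly as stated, with the single product $xy$ and the finite family of rationally scaled linear polynomials $\frac{b}{a}y$, so that the identities $a\left(x+\frac{b}{a}y\right)=ax+by$ and $(ab)(xy)=(ax)(by)$ transport the monochromatic configuration from $[1,R]$ to $\{A\overrightarrow{X'},\,A\overrightarrow{X'}+B\overrightarrow{Y'},\,A\overrightarrow{X'}\cdot B\overrightarrow{Y'}\}$ with $\overrightarrow{X'}=x\overrightarrow{X}$ and $\overrightarrow{Y'}=y\overrightarrow{Y}$. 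If you want to keep your route, you must actually prove the multivariable, multi-product, central-set version of Moreira's theorem; since that statement is at least as strong as Theorem \ref{todo}, it cannot be cited as a plausible variant but would constitute the whole content of the proof.
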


Although our original result is little stronger. Our proof confirms that the set $\{a,a+b,ab:a\in A\overrightarrow{X}, b\in B\overrightarrow{Y} \}$ is monochromatic.

Letting $A=B=(1),$ from Theorem \ref{todo} we have $\{x,x+y,xy\}$ is monochromatic.

\section{Proof of Theorem \ref{todo}}

We will use the following lemma to prove Theorem \ref{todo}. 

\begin{lem}\label{lem}
    If $A$ and $B$ are two finite image partition regular matrices then for every finite coloring of the set of $\N$, there exists $\overrightarrow{X}, \overrightarrow{Y}$ such that $\{A\overrightarrow{X}, B\overrightarrow{Y}, A \overrightarrow{X}\cdot B\overrightarrow{Y}\}$ is monochromatic.
\end{lem}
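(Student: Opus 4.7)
My plan is to combine the image partition regularity of $A$ and $B$ with Theorem \ref{essential} via a refined-coloring argument, treating $\overrightarrow{Y}$ as a ``Moreira-parameter'' and $\overrightarrow{X}$ as the ``Moreira-variable''.

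First, observe that for any fixed $\overrightarrow{Y}\in\N^{m}$, writing $B\overrightarrow{Y}=(b_{1},\dots,b_{v})$, one can refine the given $r$-coloring $\chi$ of $\N$ into a coloring
\[
\widetilde{\chi}_{\overrightarrow{Y}}(x)=\bigl(\chi(x),\,\chi(xb_{1}),\,\dots,\,\chi(xb_{v})\bigr)
\]
of $\N$ by $r^{v+1}$ colors. Since $A$ is image partition regular, there exist a vector $\overrightarrow{X}(\overrightarrow{Y})\in\N^{n}$ and a tuple $\pi(\overrightarrow{Y})=(\alpha(\overrightarrow{Y}),\beta_{1}(\overrightarrow{Y}),\dots,\beta_{v}(\overrightarrow{Y}))\in[r]^{v+1}$ such that every entry $a$ of $A\overrightarrow{X}(\overrightarrow{Y})$ satisfies $\chi(a)=\alpha(\overrightarrow{Y})$ and $\chi(ab_{k})=\beta_{k}(\overrightarrow{Y})$ for each $k$.

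The remaining task is to choose $\overrightarrow{Y}=\overrightarrow{Y}^{*}$ so that the tuple $\pi(\overrightarrow{Y}^{*})$ is constant and equal to the common $\chi$-color of the entries of $B\overrightarrow{Y}^{*}$. To accomplish this I would invoke Theorem \ref{essential}, taking the polynomial family to consist of the entries $b_{k}(\overrightarrow{Y})$ of $B\overrightarrow{Y}$ (viewed as linear polynomials with no constant term) together with the entries of $A\overrightarrow{X}$; this should produce a common color $i_{0}$, a vector $\overrightarrow{Y}^{*}$ with $B\overrightarrow{Y}^{*}\subset C_{i_{0}}^{v}$, and an infinite supply of valid ``$x$''-values such that $xb_{k}(\overrightarrow{Y}^{*})\in C_{i_{0}}$ for every $k$. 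Image partition regularity of $A$, applied inside this infinite supply (thickened to a central set if needed), then yields the desired $\overrightarrow{X}$, after which one verifies that every $a\in A\overrightarrow{X}$, every $b\in B\overrightarrow{Y}^{*}$, and every product $ab$ sits inside $C_{i_{0}}$.

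The principal obstacle is the coordination step: the pattern $\pi(\overrightarrow{Y})$ depends on multiplicative data ($xb_{k}$), whereas image partition regularity by itself controls only the additive structure of $A\overrightarrow{X}$ and $B\overrightarrow{Y}$. Theorem \ref{essential} is designed to bridge precisely this gap (it is Moreira's theorem strengthened to an assertion about infinitely many valid $x$'s inside a single color class), so the proof essentially amounts to applying Theorem \ref{essential} in sufficient generality --- once for the ``$\overrightarrow{Y}$''-direction coming from $B$ and once for the ``$\overrightarrow{X}$''-direction coming from $A$ --- so that the multiplicative and image-partition-regular constraints can be simultaneously satisfied.
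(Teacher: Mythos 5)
Your first step (refining the colouring by the colours of $x$ and of $x b_{k}$ and applying image partition regularity of $A$ for each fixed $\overrightarrow{Y}$) is fine, but the coordination step --- which you yourself flag as the principal obstacle --- is never actually carried out, and the appeal to Theorem \ref{essential} cannot carry it. In Theorem \ref{essential} the polynomial family $F$ enters only \emph{additively}, in the pattern $\{x,\,xy,\,x+f(y):f\in F\}$; the only product that theorem controls is the single product $x\cdot y$ with one scalar $y$. It therefore cannot deliver monochromaticity of the several products $x\cdot b_{k}$ for the distinct entries $b_{1},\dots,b_{v}$ of $B\overrightarrow{Y}$, let alone force those colours to coincide with the colour of the $b_{k}$'s themselves. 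Moreover the entries of $B\overrightarrow{Y}$ are linear forms in several variables $y_{1},\dots,y_{m}$, not elements of $\Po$ (one-variable polynomials without constant term), and ``together with the entries of $A\overrightarrow{X}$'' does not parse, since $\overrightarrow{X}$ is not yet chosen at that point. Note also that the Lemma contains no sums at all, which is a sign that Theorem \ref{essential} (whose whole point is the additive part $x+f(y)$) is the wrong tool here; in the paper it is used to pass from the Lemma to Theorem \ref{todo}, not to prove the Lemma.

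There is a second gap: even granting an ``infinite supply of valid $x$-values'', image partition regularity of $A$ cannot be applied inside it. An infinite set need not contain the image of a finite IPR matrix (a lacunary set contains no $3$-term arithmetic progression, hence no image of the van der Waerden matrix), and Theorem \ref{essential} as stated gives only infinitude, not centrality; ``thickened to a central set if needed'' is exactly the missing argument. The paper's proof avoids both problems by working inside a single minimal idempotent $p$ of $(\bN,\cdot)$: for a colour class $C\in p$ one passes to $C^{\star}=\{n\in C:n^{-1}C\in p\}$, uses the fact (Hindman--Strauss, Exercise 15.6.2) that a finite IPR matrix has an image inside every member of such a $p$, first finds $A\overrightarrow{X}\subset C^{\star}$, and then finds $B\overrightarrow{Y}$ inside $C^{\star}\cap\bigcap_{m\in A\overrightarrow{X}}m^{-1}C^{\star}\in p$, so that every product $a\cdot b$ also lands in $C$. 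The shifted sets $m^{-1}C^{\star}$ do precisely the colour-coordination you were trying to achieve by varying $\overrightarrow{Y}$: instead of forcing your pattern $\pi(\overrightarrow{Y})$ to be constant, both images are placed inside one idempotent ultrafilter which absorbs the multiplicative shifts. As written, your argument does not close this gap.
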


Let us first prove Theorem \ref{todo} assuming the above lemma.

\begin{proof}[Proof of Theorem \ref{todo} assuming Lemma \ref{lem}]
 By a compactness argument, let $R\in \N$ be such that for any $r$-coloring of $[1,R],$ there exists a monochromatic copy of the form $\{A\overrightarrow{X}, B\overrightarrow{Y}, A \overrightarrow{X}\cdot B\overrightarrow{Y}\}.$  Define the $r^R$ coloring $\omega'$ of $\N$ by choosing  $$\omega'(\alpha)=\omega'(\beta)\text{ if and only if }\omega(i\alpha)=\omega(i\beta)\text{ for all }i\in [1,R].$$
For every $P\in \Po,$ and $r\in \Q,$ define a new polynomial $P_r\in \Po$ by $P_r(x)=P(rx).$ Define $$F_1=\left\lbrace \frac{1}{y}P_\frac{z}{q}:P\in F,\text{ and }y,z\in [1,R]  \right\rbrace \in \p_f(\Po).$$

Now from Theorem \ref{essential}, there exists $y\in \N$ and infinitely many $x\in \N$ such that $\left\lbrace x,x\cdot y,x+P(y):P\in F \right\rbrace$ is monochromatic under the coloring $\omega'.$ We say $C$ to be this collection of such $x's.$ Define $$\mathcal{P}=\left\lbrace x,x\cdot y,x+P(y):P\in F, x\in C \right\rbrace.$$ Now $\omega'(x)=\omega'(x\cdot y)=\omega'\{x+P(y):P\in F\}$ induces a $r$-coloring $\chi$ over $[1,R]$ as $$\chi(m)=\omega(m\cdot \mathcal{P}).$$  Now from the choice of $R$ there exists $a,d\in [1,R]$ such that $$\omega\left(\left\lbrace A\overrightarrow{X}, B\overrightarrow{Y}, A \overrightarrow{X}\cdot B\overrightarrow{Y}\right\rbrace \cdot \mathcal{P}\right)=constant.$$ Denote $$D=\{A\overrightarrow{X}, B\overrightarrow{Y}, A \overrightarrow{X}\cdot B\overrightarrow{Y}\}\cdot \mathcal{P}.$$
%Now define $E=\left\lbrace A \overrightarrow{X}\cdot c: c\in C \right\rbrace \subseteq D.$ Choose some $A \overrightarrow{cX}\in E.$ 
Fix $x,y\in \N$ such that $\{x,xy,x+P(y):P\in F\}\subset \mathcal{P}.$
Now for each $a\in A \overrightarrow{X}$ and $b\in B\overrightarrow{Y},$ we have $$a\cdot \left(x+\frac{b}{a}y \right)=ax+by$$ is a member of  $A \overrightarrow{xX}+B\overrightarrow{yY}.$ Hence  $A \overrightarrow{xX}+B\overrightarrow{yY}\subseteq D.$
Again for $a\in A \overrightarrow{X}$ and $b\in B\overrightarrow{Y},$ we have $$(a b)\cdot (xy)=(ax)\cdot (by) $$ is a member of $A \overrightarrow{xX}\cdot B\overrightarrow{yY}.$ And also $A \overrightarrow{xX}\subset D.$
But the set $D$ is monochromatic. So after we redefine $\overrightarrow{xX}$ by $\overrightarrow{X}$ and $\overrightarrow{yY}$ by $\overrightarrow{Y},$ we have $\{A\overrightarrow{X}, A\overrightarrow{X}+B\overrightarrow{Y}, A \overrightarrow{X}\cdot B\overrightarrow{Y}\}$ is monochromatic. This completes the proof. \end{proof}

Now we prove Lemma \ref{lem}. We need some technical facts about ultrafilters to prove Lemma \ref{lem}. Let $\beta \N$ be the collection of ultrafilters and for any two $p,q\in \bN$ let $A\in p\cdot q\iff \{x:x^{-1}A\in q\}\in p$, where $x^{-1}A=\{y:xy\in A\}.$ Then $(\bN,\cdot)$ forms a compact right topological semigroup with the right action is continuous. Let  $K(\bN,\cdot)$ be the minimal two-sided ideal. Each member of the idempotents of $K(\bN,\cdot)$ is a minimal idempotent ultrafilter and its members are called Central sets. From \cite[Exercise 15.6.2]{hs}, it follows that if $A$ is a finite image partition regular matrix and $p\in K(\bN,\cdot),$ then for every $C\in p,$ there exists $\overrightarrow X$ such that $A\overrightarrow X \subset C.$

\begin{proof}[Proof of Lemma \ref{lem}]
Let $p$ be a minimal idempotent ultrafilter and $C\in p.$ Then  the set $C^\star=\{n\in C:n^{-1}C\in p\}\in p,$ and so from the \cite[Lemma 4.14]{hs}, we have for each $m\in C^\star,$ $m^{-1}C^\star \in p.$ As $p$ is minimal, from \cite[Exercise 15.6.2]{hs} we have $\overrightarrow{X}$ such that each element of $A\overrightarrow{X}$ lies in $C^\star.$ Hence $$D=C^\star \cap \bigcap_{m\in A \overrightarrow{X}}m^{-1}C^\star \in p.$$ Now from \cite[Exercise 15.6.2]{hs} there exists $\overrightarrow{Y}$ such that $\{n: n\in B \overrightarrow{Y}\}\subset D.$ Hence all elements of $A\overrightarrow{X}, B\overrightarrow{Y}, A \overrightarrow{X}\cdot B\overrightarrow{Y}$ are members of $C.$ This completes the proof.
    
\end{proof}

However, we don't know if Theorem \ref{todo} is true for infinite image partition regular matrices $A$ and $B.$

\section*{Acknowledgement} The author of this paper is supported by NBHM postdoctoral fellowship with reference no: 0204/27/(27)/2023/R \& D-II/11927.
    
%\end{proof}


\begin{thebibliography}{10}
%\nocite{key_of_reference_to_check}
\bibitem{AG} S. D. Adhikari, and S. Goswami: Homogeneous patterns in Ramsey theory, ArXiV: submit/6159148.

\bibitem{al1}  R. Alweiss: Monochromatic sums and products of polynomials, Discrete Anal. 2024:5, 7 pp. 2.


%\bibitem{al2} R. Alweiss: Monochromatic sums and products over $\mathbb{Q}$. Preprint (2023) arXiv:2307.08901.

\bibitem{update} V. Bergelson: Ergodic Ramsey theory—an update, in Ergodic Theory of $\mathbb{Z}^d$ Actions (Warwick, 1993–1994), London Math. Soc. Lecture Note Ser. 228, Cambridge Univ. Press, Cambridge, 1996, pp. 1–61.

%\bibitem{bm1} V. Bergelson, and J. Moreira: Ergodic theorem involving additive and multiplicative groups of a field and patterns, Ergodic Theory Dynam. Systems, 37 (3), May 2017, pp. 673 - 692.

%\bibitem{bm2} V. Bergelson, and J. Moreira: Measure preserving actions of affine semigroups and $\{x+y,xy\}$ patterns, 38  (2), Ergodic Theory Dynam. Systems, April 2018 , 473 - 498.


\bibitem{...} M. Bowen: Monochromatic products and sums in $2$-colorings of $\mathbb{N}$, Adv. Math. 462 (2025), 110095.

%\bibitem{b} M. Bowen: Composite Ramsey theorems via trees,  arXiv:2210.14311v2.

\bibitem{erdos} P. Erd\H{o}s: Problems and results on combinatorial number theory. III, Number theory day (Proc. Conf., Rockefeller Univ., New York, 1976), Lecture Notes in Math., Vol. 626, Springer, Berlin, 1977, pp. 43–72.

%\bibitem{gos} S. Goswami: Monochromatic Translated Product and Answering Sahasrabudhe's Conjecture,  arXiv:2412.17868v1.

\bibitem{.} R. L. Graham, B. L. Rothschild, and J. H. Spencer: Ramsey Theory, second ed., Wiley-Intersci. Ser. Discrete Math. Optim., John Wiley \& Sons, New York, 1990.

%\bibitem{fsum} N. Hindman: Finite sums from sequences within
%cells of partitions of $\mathbb{N}$, J. Comb. Theory (Series A) 17 (1974), 1-11.

%\bibitem{hind} N. Hindman: Partitions and sums and products of integers, Trans. Amer. Math. Soc. 247 (1979), 227–245.

\bibitem{..} N. Hindman: Partitions and sums and products of integers, Trans. Amer. Math. Soc. 247 (1979), 227–245.

\bibitem{hls} N. Hindman, I. Leader, and D. Strauss: Open problems in partition regularity, Combin. Probab. Comput. 12 (2003), 571–583.

\bibitem{hs} N. Hindman, and D. Strauss: Algebra in the Stone-\v{C}ech
Compactifications: theory and applications, second edition, de Gruyter,
Berlin, 2012.

\bibitem{anal} J. Moreira: Monochromatic sums and products in $\mathbb{N}$
, Annals of Mathematics (2) 185 (2017), no. 3, 1069–1090. 

%\bibitem{rado} R. Rado: Studien zur Kombinatorik, Math. Z. 36 (1933), 242–280.

\bibitem{vdw} B.L. van der Waerden: Beweis einer baudetschen vermutung. Nieuw.
Arch. Wisk., $15 (1927), 212-216.$

\end{thebibliography}
\end{document}